\newtheorem{theorem}{Theorem}
\newtheorem{proposition}{Proposition}
\newenvironment{theoremp}[1]
  {%
   \addtocounter{theorem}{-1}%
   \begin{theorem}}
   {\end{theorem}}
\theoremstyle{remark}
\newtheorem{remark}{Remark}
\numberwithin{equation}{section}
\newcommand{\Ga}{\Gamma}
\newcommand{\la}{\lambda}
\newcommand{\N}{\mathbb N}
\newcommand{\Z}{\mathbb Z}
\newcommand{\R}{\mathbb R}
\newcommand{\C}{\mathbb C}
\author{Michael J.\ Schlosser}
\address{Fakult\"at f\"ur Mathematik, Universit\"at Wien,
Oskar-Morgenstern-Platz 1, A-1090 Vienna, Austria}
\email{michael.schlosser@univie.ac.at}
\thanks{The author's research was partly supported by
FWF Austrian Science Fund grant
\href{https://doi.org/10.55776/P32305}{10.55776/P32305}.}
\title[Orthogonal functions and a multilateral matrix inverse]{A
  family of orthogonal functions on the
unit circle and a new multilateral matrix inverse}
\dedicatory{Dedicated to Tom H.\ Koornwinder on the occasion
of his 80th birthday}
\subjclass[2020]{Primary 33D45; Secondary 15A05, 33C20, 33D15, 33C67, 33D67}
\keywords{orthogonal functions, matrix inverses,
  bilateral hypergeometic series, bilateral basic hypergeometric series,
$A_r$ (basic) hypergeometric series}
\begin{document}

\begin{abstract}
Using Bailey's very-well-poised $_6\psi_6$ summation, we
show that a specific sequence of well-poised bilateral basic
hypergeometric $_3\psi_3$ series form a family of
orthogonal functions on the unit circle.
We further extract a bilateral matrix inverse from
Dougall's ${}_2H_2$ summation which we use,
in combination with the Pfaff--Saalsch\"utz summation,
to derive a summation for a particular bilateral
hypergeometric $_3H_3$ series.
We finally provide multivariate extensions of the bilateral
matrix inverse and the $_3H_3$ summation in the setting of
hypergeometric series associated to the root system $A_r$.
\end{abstract}

\maketitle

\section{Introduction}\label{sec:intro}
Orthogonality of functions and matrix inversion are two topics
that are closely tied together (see e.g.\ \cite{CK}).
In this paper, we make use of summation theorems for bilateral
(and multilateral) hypergeometric and basic hypergeometric
series to derive results belonging to these two topics:

Firstly, we establish the orthogonality of a family of functions on the unit
circle that can be represented in terms of well-poised bilateral basic
hypergeometric $_3\psi_3$ series.
We achieve this by an application of Bailey's very-well-poised
$_6\psi_6$ summation theorem. (The reader is kindly referred to \cite{Sl}
and to \cite{GR} regarding standard notions and results in the theories of
hypergeometric and basic hypergeometric series, respectively.)

Secondly, we utilize Dougall's ${}_2H_2$ summation
to find a new bilateral matrix inverse (with explicit entries
consisting of products of gamma functions) which we use to derive,
in combination with an instance of the Pfaff--Saalsch\"utz summation,
by application of \textit{inverse relations} (cf.\ \cite{R})
a summation for a particular bilateral hypergeometric $_3H_3$ series.
We then provide multivariate extensions of these results
in the setting of hypergeometric series associated to the root
system $A_r$. This is achieved by utilizing an $A_r$ $_2H_2$
summation theorem by Gustafson, from which we extract a multilateral
matrix inverse (again, with explicit entries
consisting of products of gamma functions). We use our inversion result,
in combination with an $A_r$ Pfaff--Saalsch\"utz summation by Milne,
to derive a summation for a particular multilateral
hypergeometric $_3H_3$ series associated to the root system $A_r$.

We would like to point out that the use of multivariate matrix inverses
in special functions (in particular, in the theory of ordinary, basic,
and elliptic hypergeometric series associated with root systems)
has proved to be a very powerful tool in obtaining new results.
Some papers that demonstrate this are
\cite{B,BM,BS,C,CG,DJ,KS,LS1,LS2,LM,M2,ML,R,Ro,RS,S1,S2,S5,S6,S7,SW,ZW}.

Our paper is organized as follows. In Section~\ref{sec:sb}
we review some notions of ordinary hypergeometric and basic
hypergeometric series and list some of the classical results (summations)
that we employ. In Section~\ref{sec:of} we are concerned with a
particular family of basic hypergeometric functions (that can be represented
as multiples of $_3\psi_3$ series) for which we establish their orthogonality
on the unit circle. Section~\ref{sec:mi} features bilateral and
multilateral matrix inverses together with univariate and multivariate
$_3H_3$ summations as applications. Finally, in Section~\ref{sec:con}
we provide a conclusion where an open problem is stated for future
research.

\section{Some background on unilateral and bilateral
hypergeometric and basic hypergeometric series}\label{sec:sb}
Let $\Z$ and $\N_0$ denote the sets of integers and of non-negative integers,
respectively. We shall generally assume the parameters $z,a,b,c,\ldots$
that appear in our expressions to be complex numbers,
and further assume the special variable $q$ (called the ``base'')
to satisfy $0<|q|<1$ unless explicitly stated otherwise.
(In particular, when considering orthogonal functions in
Section~\ref{sec:of} we shall restrict $q$ to be real
with $0<q<1$.)

For $k\in\Z$
the \textit{shifted factorial} is defined as

\[
(a)_k:=\frac{\Gamma(a+k)}{\Gamma(a)}.
\]

Similarly, for $k\in\Z$
the \textit{$q$-shifted factorial} is defined as

\[
(a;q)_k:=\frac{(a;q)_\infty}{(aq^k;q)_\infty}\qquad\text{where}\quad
(a;q)_\infty=\prod_{j\ge 0}(1-a q^j).
\]

We find it convenient to use the following compact product notations:
\begin{alignat*}{2}
\Ga(a_1,\dots,a_m):={}&\Ga(a_1)\dotsm\Ga(a_m),&&\\
(a_1,\dots,a_m)_k:={}&(a_1)_k\dotsm(a_m)_k, &&k\in\Z,\\
(a_1,\dots,a_m;q)_k:={}&(a_1;q)_k\dotsm(a_m;q)_k, \qquad&&
k\in\Z\cup\{\infty\}.
\end{alignat*}
We recall the following definitions of unilateral and bilateral 
(basic) hypergeometric series (cf.\ \cite{Sl} and \cite{GR}).
In the following, $a_0,a_1,\ldots,a_r$ are ``upper''
(complex) parameters, $b_1,\ldots,b_r$ are ``lower''
(complex) parameters, and $z$ is a power series variable,
in addition to the base $q$ appearing in the basic variants of
the respective series.

The (\textit{unilateral}) hypergeometric ${}_{r+1}F_r$ series
is defined by
\begin{equation}
{}_{r+1}F_r\!\left[\begin{matrix}a_0,a_1,\dots,a_r\\
b_1,b_2,\dots,b_r\end{matrix};z\right]
:=\sum_{k=0}^{\infty}\frac{(a_0,a_1,\dots,a_r)_k}
{(1,b_1,\dots,b_r)_k}z^k.
\end{equation}
The $_{r+1}F_r$ series  converges absolutely (if it is not a finite sum)
for $|z|<1$ or for $|z|=1$ when
$\Re(b_1+\cdots+b_r-a_0-\ldots-a_r) >0$,
and converges conditionally if $|z|=1$, $z\neq 1$, when
$0\ge \Re(b_1+\cdots+b_r-a_0-\ldots-a_r)>-1$.

The \textit{bilateral} hypergeometric ${}_rH_r$ series
is defined by
\begin{equation*}
{}_rH_r\!\left[\begin{matrix}a_1,a_2\dots,a_r\\
b_1,b_2,\dots,b_r\end{matrix};z\right]
:=\sum_{k=-\infty}^{\infty}\frac{(a_1,\dots,a_r)_k}
{(b_1,\dots,b_r)_k}z^k.
\end{equation*}
The $_rH_r$ series converges absolutely (if it is not a finite sum)
for $|z|=1$ when $\Re(b_1+\cdots+b_r-a_1-\ldots-a_r) >-1$,
and converges conditionally if $|z|=1$, $z\neq 1$, when
$1\ge \Re(b_1+\cdots+b_r-a_1-\ldots-a_r)>0$.

The (\textit{unilateral}) basic hypergeometric
${}_{r+1}\phi_r$ series is defined by
\begin{equation*}
{}_{r+1}\phi_r\!\left[\begin{matrix}a_0,a_1,\dots,a_r\\
b_1,b_2,\dots,b_r\end{matrix};q,z\right]
:=\sum_{k=0}^{\infty}\frac{(a_0,a_1,\dots,a_r;q)_k}
{(q,b_1,\dots,b_r;q)_k}z^k.
\end{equation*}
The ${}_{r+1}\phi_r$ series converges absolutely
(if it is not a finite sum) for $|z|<1$.

The \textit{bilateral} basic hypergeometric
${}_r\psi_r$ series is defined by
\begin{equation*}
{}_r\psi_r\!\left[\begin{matrix}a_1,a_2\dots,a_r\\
b_1,b_2,\dots,b_r\end{matrix};q,z\right]
:=\sum_{k=-\infty}^{\infty}\frac{(a_1,\dots,a_r;q)_k}
{(b_1,\dots,b_r;q)_k}z^k.
\end{equation*}
The ${}_r\psi_r$ series converges absolutely (if it is not a finite sum)
for $|b_1\cdots b_r/a_1\cdots a_r|<|z|<1$.

We now note a number of well-known classical summations
that we encounter in this paper, for easy reference.

The \textit{Pfaff--Saalsch\"utz summation} \cite[Equation~(2.3.1.3)]{Sl}
is given by
\begin{equation}\label{eq:3F2}
{}_3F_2\!\left[\begin{matrix}a,\,b,\,-m\\
c,\,a+b+1-m-c\end{matrix}\,;1\right]=\frac{(c-a,c-b)_m}{(c,c-a-b)_m},
\end{equation}
where $m$ is a non-negative integer.
As the series in \eqref{eq:3F2} terminates (due to the appearance
of $-m$ as an upper parameter),
no condition of convergence is needed.

\textit{Dougall's ${}_2H_2$ summation} \cite[Equation~(6.1.2.1)]{Sl} is
\begin{equation}\label{eq:2H2}
{}_2H_2\!\left[\begin{matrix}a,\,b\\
c,\, d\end{matrix}\,;1\right]=\frac{\Ga(1-a,1-b,c,d,c+d-a-b-1)}
{\Ga(c-a,c-b,d-a,d-b)},
\end{equation}
where $\Re(c+d-a-b-1)>0$.

In \cite[Section~1.6]{K}, Koornwinder noted the following ${}_1H_1$ summation,
as a (formal) limiting case of Ramanujan's $_1\psi_1$ summation formula:
\begin{equation}\label{eq:1H1}
{}_1H_1\!\left[\begin{matrix}a\\
c\end{matrix}\,;z\right]=\frac{\Ga(1-a,c)}
{\Ga(c-a)}\frac{(1-z)^{c-a-1}}{(-z)^{c-1}},
\end{equation}
where $|z|=1$, $z\ne 1$, and $\Re(c-a-1)>0$.
We note that the ${}_1H_1$ summation also results as a formal limiting case
of Dougall's ${}_2H_2$ summation. (Replace $b$ by $dz$
in \eqref{eq:2H2} and let $d\to\infty$.)

In our present investigations involving basic hypergeometric series, we
deal with series that are \textit{very-well-poised}
(cf.\ \cite{GR} for the terminology; other standard terminologies we
adopt, is that of a series being \textit{well-poised}, being
\textit{balanced} and, more generally, \textit{$k$-balanced}).
The following two summations in \eqref{eq:8phi7} and \eqref{eq:6psi6}
concern very fundamental ones for basic hypergeometric series
(as they stand on the very top of the hierarchy of summation theorems
for unilateral and bilateral basic hypergeometric series):
\textit{Jackson's ${}_8\phi_7$ summation}~\cite[Equation~(2.6.2)]{GR}
is given by
\begin{align}\label{eq:8phi7}
{}_8\phi_7\!\left[\begin{matrix}a,\;qa^{\frac12},\,-qa^{\frac12},\,
b,\,c,\,d,\,e,\,q^{-m}\\
a^{\frac12},\,-a^{\frac12},\,aq/b,\,aq/c,\,aq/d,\,aq/e,\,aq^{1+m}\end{matrix}
  \,;q,q\right]&\notag\\
  =\frac{(aq,aq/bc,aq/bd,aq/cd;q)_m}
{(aq/b,aq/c,aq/d,aq/bcd;q)_m}&,
\end{align}
where $a^2q=bcdeq^{-m}$, $m$ being a non-negative integer.
As the series in \eqref{eq:8phi7} terminates (due to the appearance
of $q^{-m}$ as an upper parameter),
no condition of convergence is needed.

\textit{Bailey's ${}_6\psi_6$ summation} \cite[Equation~(5.3.1)]{GR} is
\begin{align}\label{eq:6psi6}
&{}_6\psi_6\!\left[\begin{matrix}qa^{\frac12},\,-qa^{\frac12},\,
b,\,c,\,d,\,e\\
a^{\frac12},\,-a^{\frac12},\,aq/b,\,aq/c,\,aq/d,\,aq/e\end{matrix}
  \,;q,\frac{a^2q}{bcde}\right]\notag\\
&=\frac{(q,aq,q/a,aq/bc,aq/bd,aq/be,aq/cd,aq/ce,aq/de;q)_\infty}
{(q/b,q/c,q/d,q/e,aq/b,aq/c,aq/d,aq/e,a^2q/bcde;q)_\infty},
\end{align}
where $|a^2q/bcde|<1$.

A combinatorial proof of \eqref{eq:8phi7}, in the more general
setting of elliptic hypergeometric series (cf.\ \cite[Chapter~11]{GR}),
is given in \cite[Equations~(2.18)--(2.20)]{S4}, whereas a purely
combinatorial proof of \eqref{eq:6psi6} still appears to be missing
as of today. For some applications of Bailey's ${}_6\psi_6$ summation
in \eqref{eq:6psi6} to number theory, see \cite[Section~3]{A} (where
also an elementary analytic proof of \eqref{eq:6psi6} can be found that
utilizes $q$-difference equations together with the uniqueness of a
Laurent series about the origin).
Further, we would like to stress that the $_2H_2$ summation
in \eqref{eq:2H2} is \textit{not} a direct special case of
the $_6\psi_6$ summation in \eqref{eq:6psi6}.
For a discussion of how to obtain \eqref{eq:2H2}
in several steps from \eqref{eq:6psi6} by a careful application
of Tannery's theorem, see \cite{BT}.

Finally, we would like to fix the notation
$|n|:=n_1+\cdots+n_r$ (and similarly for $|k|$ and $|l|$;
not to be confused with a norm)
for multi-indices of integers when dealing with
multivariate hypergeometric and basic hypergeometric series.
In the multivariate setting, we work here in the theory of hypergeometric
and basic hypergeometric series associated with the root system $A_r$.
(See \cite{S8} for a survey of such series.)

\section[Orthogonal functions on the unit circle]{Orthogonal functions
on the unit circle}\label{sec:of}

Let $f$ and $g$ be two complex-valued functions on the
\textit{unit circle} $|z|=1$. 
We consider the \textit{inner product}
\begin{equation*}
\langle f,g\rangle:=\frac 1{2\pi\mathrm i}\int_{|z|=1}
f(z)\overline{g(z)}\,w(z)\frac{{\mathrm d}z}z,
\end{equation*}
where $w(z)$ is a positive definite weight function on the unit circle.

The two functions $f$ and $g$ are \textit{orthogonal}
with respect to the weight $w$ if
\begin{equation*}
\langle f,g\rangle=0.
\end{equation*}
The sequence of functions $(f_n)_n$ on the unit circle
forms a \textit{family of orthogonal functions} if
\begin{equation*}
\langle f_n,f_m\rangle=h_n\delta_{n,m},
\end{equation*}
for some sequence of positive real numbers $(h_n)_n$
(the ``squared norms'').

The simplest example for a family of orthogonal functions
on the unit circle is $(f_n)_{n\in\Z}$ with
\begin{equation*}
f_n=z^n,\qquad\text{with weight function}\quad w(z)=1.
\end{equation*}

Indeed, since $\overline{z}=z^{-1}$ on the unit circle it is easy to compute
\begin{align}\label{eq:Fo}
\langle z^n,z^m\rangle&=\frac 1{2\pi\mathrm i}\int_{|z|=1}
z^{n-m}\frac{{\mathrm d}z}z\notag\\&=\frac 1{2\pi}\int_{0}^{2\pi}
e^{\mathrm i\theta(n-m)}{\mathrm d}\theta=
\begin{cases}1&\;\;\text{if $n=m$},\\
\frac{e^{i\theta(n-m)}}{2(n-m)\pi\mathrm i}\Big|_{\theta=0}^{2\pi}=0
&\;\;\text{if $n\ne m$},
\end{cases}
\end{align}
which is commonly referred to as \textit{Fourier orthogonality}.

Many generalizations of this simple family of orthogonal functions exist.
Usually different weight functions $w(z)$ are considered
with corresponding families of functions that are orthogonal
with respect to the given weight function.
Moreover, in many of the classic cases the functions are actually
\textit{polynomials} -- having the advantage that Favard's theorem
then holds. (See
\cite{ENZG} for the unit circle analogue of Favard's theorem.)
Most notable extensions of the simple family of orthogonal functions
$(z^n)_{n}$ are the Rogers--Szeg\H{o} polynomials, the $q$-Hermite
polynomials, and the Askey--Wilson polynomials.
See \cite{I} for a treatise on
various important families of orthogonal and $q$-orthogonal
polynomials, their properties, with motivation and proofs.

For the following theorem, we assume
$0<q<1$ (and fix the branch of square root of $q$),
$z,g,b,c\in\C\setminus\{0\}$ with $c=\overline{b}$,
and choose $\Re(g)=0$ (thus $\overline{g}=-g$ and $g^2<0$).
We further tacitly assume the parameters to be such that
no poles or zeroes appear in \eqref{eq:fn} and in \eqref{eq:hn1}.
(We will make similar assumptions on the parameters
in other results in this paper.)
\begin{theorem}[A family of orthogonal functions]\label{thm:orthf}
  Let
  \begin{subequations}\label{eq:fn}
\begin{align}\label{eq:fna}
    F_n=F_n(z;g,b,c\,|q):={}&\sum_{k\in\Z}F_{n,k}(g,b,c\,|q)z^k,\\
\intertext{where}
F_{n,k}(g,b,c\,|q):={}&\frac{(1-gq^k)}{(1-g)}
\frac{(b;q)_{n+k}(g^2/c;q)_{k-n}}{(cq;q)_{n+k}(g^2q/b;q)_{k-n}}q^{\frac k2}.
\end{align}                    
\end{subequations}
Then $\big(F_n\big)_{n\in\Z}$ is an \textit{infinite family of
  orthogonal functions} on the unit circle with respect to the
weight function $w(z)=1$
and squared norm evaluation
\begin{align}\label{eq:hn1}
  &h_n=h_n(g,b,c\,|q)\notag\\
  &:=
\frac{(q,q,g^2q,q/g^2,g^2q/bc,bcq/g^2,cq/b,bq/c;q)_\infty}
{(g^2q/b,bq/g^2,g^2q/c,cq/g^2,bq,q/b,cq,q/c;q)_\infty}
  \frac{(1-bc/g^2)}{(1-bcq^{2n}/g^2)}q^n.
\end{align}
\end{theorem}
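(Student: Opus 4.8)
The plan is to expand $\langle F_n,F_m\rangle$ into Fourier modes and to recognize the resulting bilateral $q$-series as an instance of Bailey's very-well-poised $_6\psi_6$ summation \eqref{eq:6psi6}. As a preliminary I would record that the Laurent series \eqref{eq:fna} converges absolutely on an annulus containing the unit circle: from \eqref{eq:fna} one computes
\[
\frac{F_{n,k+1}(g,b,c\,|q)}{F_{n,k}(g,b,c\,|q)}=\frac{1-gq^{k+1}}{1-gq^k}\,
\frac{(1-bq^{n+k})(1-g^2q^{k-n}/c)}{(1-cq^{n+k+1})(1-g^2q^{k-n+1}/b)}\,q^{\frac12},
\]
which tends to $q^{1/2}$ as $k\to+\infty$ and to $b^2/(c^2q^{1/2})$ as $k\to-\infty$; since $|b|=|c|$ (because $c=\bar b$) and $0<q<1$, the coefficients $F_{n,k}$ decay geometrically in both directions, so $F_n\in L^2$ on the unit circle and integration term by term is legitimate. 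Because $\bar z=z^{-1}$ on $|z|=1$ and $q$ is real, conjugating \eqref{eq:fna} term by term (using $\bar g=-g$, $g^2\in\R$, $\bar b=c$, $\bar c=b$) gives the clean identity $\overline{F_{m,k}(g,b,c\,|q)}=F_{m,k}(-g,c,b\,|q)$, whence by the Fourier orthogonality \eqref{eq:Fo}
\[
\langle F_n,F_m\rangle=\sum_{k\in\Z}F_{n,k}(g,b,c\,|q)\,F_{m,k}(-g,c,b\,|q).
\]

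Next I would massage the summand. The product of the two ``balanced'' prefactors is $(1-gq^k)(1+gq^k)/\bigl((1-g)(1+g)\bigr)=(1-g^2q^{2k})/(1-g^2)$, which is precisely the very-well-poised factor of a $_6\psi_6$ with base $q$ and $a=g^2$. Applying the splitting $(x;q)_{\mu+\nu}=(x;q)_\mu(xq^\mu;q)_\nu$ to each of the eight remaining $q$-shifted factorials (with shifts $\mu\in\{\pm n,\pm m\}$ and $\nu=k$) pulls a $k$-independent constant
\[
C_{n,m}:=\frac{(b;q)_n\,(g^2/c;q)_{-n}\,(c;q)_m\,(g^2/b;q)_{-m}}
{(cq;q)_n\,(g^2q/b;q)_{-n}\,(bq;q)_m\,(g^2q/c;q)_{-m}}
\]
out of the sum, and one checks that what remains is the genuine very-well-poised series in \eqref{eq:6psi6} with $a=g^2$ and $\{b,c,d,e\}=\{bq^n,\,g^2q^{-n}/c,\,cq^m,\,g^2q^{-m}/b\}$: these four upper parameters pair (each with product $g^2q$) with the four surviving lower parameters $g^2q^{1-n}/b,\,cq^{n+1},\,g^2q^{1-m}/c,\,bq^{m+1}$, and the argument equals $a^2q/(bcde)=q$, so that the convergence condition $|q|<1$ holds automatically.

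Then I would apply \eqref{eq:6psi6}. Since here $a^2q/(bcde)=q$, the factor $(a^2q/bcde;q)_\infty$ in the denominator of the right-hand side cancels $(q;q)_\infty$ in the numerator, and among the remaining numerator factors one finds $(q^{1+n-m};q)_\infty\,(q^{1+m-n};q)_\infty$; if $n\ne m$ one of these two contains the vanishing factor $1-q^0$, so $\langle F_n,F_m\rangle=C_{n,m}\cdot 0=0$, which is the required orthogonality. For $n=m$ nothing vanishes, and the task reduces to simplifying $C_{n,n}$ times the summed $_6\psi_6$ to the closed form \eqref{eq:hn1}.

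I expect this last step to be the main bookkeeping obstacle. It requires rewriting the finite $q$-shifted factorials via $(x;q)_n=(x;q)_\infty/(xq^n;q)_\infty$ and $(x;q)_{-n}=(-q/x)^nq^{\binom n2}/(q/x;q)_n$, collecting a large number of cancellations, and checking that all surviving powers of $q$ coalesce into the single factor $q^n$ of \eqref{eq:hn1}; a concrete sanity check is the case $n=0$, where $C_{0,0}=1$ and the summed $_6\psi_6$ is visibly $h_0$. In the course of this simplification one also reads off, using $c=\bar b$ and $g^2<0$, that the surviving infinite products pair into squared moduli (so the quotient of infinite products in \eqref{eq:hn1} is nonnegative) and that $1-bc/g^2$ and $1-bcq^{2n}/g^2$ are positive reals, whence $h_n>0$; together with the vanishing for $n\ne m$ this shows that $(F_n)_{n\in\Z}$ is an infinite family of orthogonal functions on the unit circle with squared norms $h_n$.
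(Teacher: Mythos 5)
Your proposal is correct and follows essentially the same route as the paper: reduce $\langle F_n,F_m\rangle$ via Fourier orthogonality to $\sum_{k}F_{n,k}(g,b,c\,|q)F_{m,k}(-g,c,b\,|q)$, pull out the $k$-independent prefactor, and apply Bailey's $_6\psi_6$ summation with $(a,b,c,d,e)\mapsto(g^2,bq^n,cq^m,g^2q^{-n}/c,g^2q^{-m}/b)$, the factor $(q^{1+n-m},q^{1+m-n};q)_\infty$ forcing $n=m$. The only piece you defer --- simplifying $C_{n,n}$ times the summed product to \eqref{eq:hn1} --- is exactly the elementary $q$-shifted-factorial bookkeeping the paper also relegates to ``elementary identities'', so there is no substantive gap.
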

\begin{remark}
Since $q,g^2\in\R$ and $c=\overline{b}$, it is clear
that $h_n\in\R$ for all $n\in\N_0$. Since $q>0$ and $g^2<0$,
it is further easy to see that $h_n>0$. Indeed, $h_n$ can be written as
\begin{align}\notag
  h_n&=\left| \frac{(q,bq/\overline{b};q)_\infty}
       {(g^2q/b,bq/g^2,bq,q/b;q)_\infty}\right|^2\\
&\quad\;\times (g^2q,q/g^2,g^2q/|b|^2,|b|^2q/g^2;q)_\infty
\frac{(1-|b|^2/g^2)}{(1-|b|^2q^{2n}/g^2)}q^n,
\end{align}
where each individual factor in the last line is positive,
hereby furnishing the claimed positivity of $h_n$.
Further, the series in \eqref{eq:fn} converges absolutely.
The two necessary conditions of convergence are
$|q^{\frac 12}|<1$ and $|c^2q^{\frac 12}/b^2|<1$.
They are trivially satisfied as they are already subsumed
by the condition $0<q<1$ and the fact that $|b|=|c|$.

What is interesting about Theorem~\ref{thm:orthf}, is that
the positive definite weight function $w(z)$ is trivial
(we have $w(z)=1$) which makes it surprising (to us)
that the orthogonality of these functions has not been
discovered earlier. In the special limiting case $0>c=b\to g^2$,
the above functions $F_n$ reduce  to
$$
\lim_{b\to g^2}F_n(z;g,b,b\,|q)=\frac{1+g}{1+gq^n}q^{\frac n2}z^n
$$
(as the sum over $k$ in \eqref{eq:fna} reduces to just one term,
namely that for $k=n$) and the weight function to
$$
\lim_{b\to g^2}h_n(g,b,b\,|q)=\frac{1-g^2}{1-g^2q^{2n}}q^n.
$$
This family of orthogonal functions
$\big(\!\lim_{b\to g^2}F_n(z;g,b,b\,|q)\big)_{n\in Z}$ coincides with
the Fourier family $(z^n)_{n\in\Z}$ after renormalization.

Finally, we notice the following representation
for the functions $F_n$ in \eqref{eq:fn} in terms of multiples
of well-poised bilateral basic hypergeometric $_3\psi_3$ series:
\begin{equation}
F_n(z;g,b,c\,|q)=\frac{(b,b/g^2;q)_n}{(cq,cq/g^2;q)_n}
\left(\frac{cq}b\right)^n\,{}_3\psi_3\!
\left[\begin{matrix}gq,bq^n,g^2q^{-n}/c\\
g,g^2q^{1-n}/b,cq^{1+n}\end{matrix};q,q^{\frac 12}z\right].
\end{equation}
It is clear that since $0<q<1$
these functions converge absolutely on the unit circle.
\end{remark}
\begin{proof}[Proof of Theorem~\ref{thm:orthf}]
We have
\begin{align*}
  \langle F_n,F_m\rangle
  &=\frac 1{2\pi\mathrm i}\int_{|z|=1}F_n(z;g,b,c\,|q)
     \overline{F_m(z;g,b,c\,|q)}\,
\frac{{\mathrm d}z}z\\
  &=\frac 1{2\pi\mathrm i}\int_{|z|=1}F_n(z;g,b,c\,|q)
    F_m(z^{-1};-g,c,b\,|q)\,
\frac{{\mathrm d}z}z\\
&=\frac 1{2\pi\mathrm i}\int_{|z|=1}
\sum_{k,l\in\Z}F_{n,k}(g,b,c\,|q)F_{m,l}(-g,c,b\,|q)z^{k-l}\,
\frac{{\mathrm d}z}z
\\&
=\sum_{k,l\in\Z}F_{n,k}(g,b,c\,|q)F_{m,l}(-g,c,b\,|q)\,\delta_{k,l}
=\sum_{k\in\Z}F_{n,k}(g,b,c\,|q)F_{m,k}(-g,c,b\,|q).
\end{align*}
(The penultimate equality used \eqref{eq:Fo} and the interchange
of a the double sum with an integral.
Since the double sum converges absolutely and the integrand is
a continuous function, the interchange of the double sum with the
integral can be justified by appealing to a suitable variant of the
Fubini--Tonelli theorem for interchanging integrals
where one of the integrals involves a discrete measure
and the other a complex measure.)
Now the last sum simplifies
by the $(a,b,c,d,e)\mapsto(g^2,bq^n,cq^m,g^2q^{-n}/c,g^2q^{-m}/b)$
special case of Bailey's ${}_6\psi_6$ summation in \eqref{eq:6psi6}
to $h_n\delta_{n,m}$ since the closed form product contains the factor
\begin{equation*}
(q^{1+n-m},q^{1-n+m};q)_\infty
\end{equation*}
which vanishes unless $n=m$. The computational details for the described
final evaluation are as follows:
\begin{align*}
&\sum_{k\in\Z}F_{n,k}(g,b,c\,|q)F_{m,k}(-g,c,b\,|q)\\
  &=\sum _{k=-\infty}^{\infty}\frac{(1-gq^k)}{(1-g)}
    \frac{(b;q)_{n+k}(g^2/c;q)_{k-n}}{(cq;q)_{n+k}(g^2q/b;q)_{k-n}}q^{\frac k2}
    \frac{(1+gq^k)}{(1+g)}
    \frac{(c;q)_{m+k}(g^2/b;q)_{k-m}}
    {(bq;q)_{m+k}(g^2q/c;q)_{k-m}}q^{\frac k2}\\
  &= \frac{(b;q)_{n}(g^2/c;q)_{-n}}{(cq;q)_{n}(g^2q/b;q)_{-n}}
    \frac{(c;q)_{m}(g^2/b;q)_{-m}}{(bq;q)_{m}(g^2q/c;q)_{-m}}\\
  &\quad\;\times\sum _{k=-\infty}^{\infty}\frac{(1-g^2q^{2k})}{(1-g^2)}
    \frac{(bq^n,g^2q^{-n}/c,cq^m,g^2q^{-m}/b;q)_k}
    {(g^2q^{1-n}/b,cq^{1+n},g^2q^{1-m}/c,bq^{1+m};q)_k}q^k\\
  &= \frac{(b;q)_{n}(g^2/c;q)_{-n}}{(cq;q)_{n}(g^2q/b;q)_{-n}}
    \frac{(c;q)_{m}(g^2/b;q)_{-m}}{(bq;q)_{m}(g^2q/c;q)_{-m}}\\
  &\quad\;\times\frac{(q,g^2q,q/g^2,cq/b,g^2q^{1-n-m}/bc,
    q^{1-n+m},q^{1+n-m},bcq^{1+n+m}/g^2,bq/c;q)_{\infty}}
    {(g^2q^{1-n}/b,cq^{1+n},g^2q^{1-m}/c,bq^{1+m},q^{1-n}/b,
    cq^{1+n}/g^2,q^{1-m}/c,bq^{1+m}/g^2,q;q)_{\infty}}\\
  &= \frac{(q,q,g^2q,q/g^2,g^2q/bc,bcq/g^2,cq/b,bq/c;q)_\infty}
    {(g^2q/b,bq/g^2,g^2q/c,cq/g^2,bq,q/b,cq,q/c;q)_\infty}
    \frac{(1-bc/g^2)}{(1-bcq^{2n}/g^2)}q^n\,\delta_{n,m},
\end{align*}
where we have used some elementary identities
involving $q$-shifted factorials to simplify the products
(cf.\ \cite[Appendix~I]{GR}).
\end{proof}

\begin{remark}
While the family of functions $\big(F_n(z;g,b,c\,|q)\big)_{n\in\Z}$
with $\Re(g)=0$ and $c=\overline{b}$ are orthogonal
on the unit circle, the summation
\begin{equation}\label{eq:Fo1}
\sum_{k\in\Z}F_{n,k}(g,b,c\,|q)F_{m,k}(-g,c,b)=h_n(g,b,c\,|q)\,\delta_{n,m}.
\end{equation}
still holds when $g\in\C$, and $b$ and $c$ are independent
(i.e., the conditions $\Re(g)=0$ and
$c=\overline{b}$ are not necessary), as long as
$|c^2/b^2|<|q^{\frac 12}|(<1)$ hold (which is required
for the absolute convergence of the sum).

We conclude that the two sequences of functions
\begin{equation}\label{eq:orel}
\big(F_n(z;g,b,c\,|q)\big)_{n\in\Z}\quad\ \text{and}\quad\ 
\big(F_m(z;-\overline{g},\overline{c},\overline{b}\,|q)\big)_{m\in\Z},
\end{equation}
are \textit{biorthogonal} to each other
on the unit circle, with respect to the weight function $w(z)$=1
(while relaxing the condition of positive definiteness
of $h_n$ that one usually demands when regarding orthogonality;
in our case we may even choose $q$ to be complex with $0<|q|<1$).
Equivalently (in view of \eqref{eq:Fo1})
we have the \textit{discrete biorthogonality}
of the doubly indexed
sequences (with first indices specifying the \textit{order}
of the respective elements, the second indices being control variables)
\begin{equation*}
\big(F_{n,k}(g,b,c\,|q)\big)_{n,k\in\Z}\quad\ \text{and}\quad\ 
\big(F_{m,l}(-\overline{g},\overline{c},\overline{b}\,|q)\big)_{m,l\in\Z},
\end{equation*}
for $0<q<1$ and $g,b,c\in\C\setminus\{0\}$,
by which we just mean that
$$\sum_{k\in\Z}F_{n,k}(g,b,c\,|q)
\overline{F_{m,k}(-\overline{g},\overline{c},\overline{b}\,|q)}
=h_n(g,b,c)\,\delta_{n,m}$$
holds for some $h_n(g,b,c)$,
which is exactly \eqref{eq:Fo1}.
\end{remark}
\begin{remark}
  In general, given a matrix inverse $G=F^{-1}$ where
$F=(f_{nk})_{n,k\in\mathbb Z}$ and
$G=(g_{nk})_{n,k\in\mathbb Z}$ (explained in more detail in
Section~\ref{sec:mi}),
we can use the Fourier orthogonality \eqref{eq:Fo} to
define two sequences of functions $F_n(z)=\sum_{k\in\mathbb Z}f_{nk}z^k$
and $G_m(z)=\sum_{l\in\mathbb Z}\bar{g_{lm}}z^l$,
$n,m\in\mathbb Z$
which (subject to suitable conditions of convergence)
form a sequence of (bilateral) biorthogonal functions
on the unit circle.  
\end{remark}

\section[Bilateral and multilateral matrix inverses]{Bilateral
  and multilateral matrix inverses}\label{sec:mi}

We now turn to (bilateral) \textit{matrix inverses}.
We consider infinite matrices
$F=(f_{nk})_{n,k\in\mathbb Z}$ and
$G=(g_{nk})_{n,k\in\mathbb Z}$, and infinite sequences
$(a_n)_{n\in\mathbb Z}$ and $(b_n)_{n\in\mathbb Z}$.
Further, we choose the entries of these matrices and sequences
to be such that (all) the sums below converge absolutely.

$F=(f_{nk})_{n,k\in\mathbb Z}$ and $G=(g_{kl})_{k,l\in\mathbb Z}$ are
\textit{inverses} of each other if and only if
\begin{align*}
  \sum _{k\in\mathbb Z}f_{nk}g_{kl}&=\delta_{nl}\qquad\qquad
  \text {for all}\quad n,l\in\mathbb Z\\
  \intertext{and}
  \sum _{k\in\mathbb Z}g_{nk}f_{kl}&=\delta_{nl}\qquad\qquad
\text {for all}\quad n,l\in\mathbb Z
\end{align*}
hold. (As the two matrices $F$ and $G$
are infinite and not necessarily (lower-)triangular,
the two sums above may be infinite and the validity
of one equation need not imply the validity of the other.)
Further,
\begin{subequations}\label{eq:fg}
\begin{align}
  \sum _{n\in\mathbb Z}f_{n k}a_{n}&=b_{k}\qquad\qquad
  \text {for all $k\in\mathbb Z$,}\label{eq:fab}\\
\intertext{if and only if}
  \sum _{k\in\mathbb Z}g_{k l}b_{k}&=a_{l}\qquad\qquad
  \text {for all $l\in\mathbb Z$.}\label{eq:gba}
\end{align}
\end{subequations}

This similarly applies to multivariate sequences involving multisums.
The two equations in \eqref{eq:fg} are usually called \textit{inverse relations}.

In a very similar way as in the proof of Theorem~\ref{thm:orthf},
Bailey's ${}_6\psi_6$ summation \eqref{eq:6psi6}
was used in \cite[Theorem~3.1]{S3} to derive the following
bilateral matrix inverse:

\textit{The infinite matrices
$F=(f_{nk})_{n,k\in\mathbb Z}$ and $G=(g_{kl})_{k,l\in\mathbb Z}$ are
{\em inverses} of each other where
\begin{subequations}\label{eq:qmi}
\begin{align}
f_{nk}&=\frac{(aq/b,bq/a,aq/c,cq/a,bq,q/b,cq,q/c;q)_{\infty}}
{(q,q,aq,q/a,aq/bc,bcq/a,cq/b,bq/c;q)_{\infty}}\notag\\
&\quad\;\times\frac{(1-bcq^{2n}/a)}{(1-bc/a)}\frac{(b;q)_{n+k}\,(a/c;q)_{k-n}}
{(cq;q)_{n+k}\,(aq/b;q)_{k-n}}\\
\intertext{and}
g_{kl}&=\frac{(1-aq^{2k})}{(1-a)}\frac{(c;q)_{k+l}\,(a/b;q)_{k-l}}
{(bq;q)_{k+l}\,(aq/c;q)_{k-l}}\,q^{k-l}.
\end{align}
\end{subequations}}

This result can be used to derive a bilateral summation from a suitable
given summation (where the given summation need not be a bilateral summation
but could also just be a unilateral summation!).
In particular, if we suitably choose $b_k$ and $a_l$ such that
\eqref{eq:gba} holds by some known summation (say, by
Jackson's ${}_8\phi_7$ summation \eqref{eq:8phi7}),
then the inverse relation \eqref{eq:fab}
(subject to absolute convergence of the series)
automatically must be true. Using exactly this method
the following particular well-poised balanced
\textit{${}_8\psi_8$ summation}
was derived in \cite[Theorem~4.1]{S3}:

\textit{Let $a$, $b$, $c$, and $d$ be indeterminates,
let $k$ be an arbitrary integer and $M$ a non-negative integer. Then
\begin{align}\label{eq:8psi8}
&{}_8\psi_8\!\left[\begin{matrix}qa^{\frac 12},-qa^{\frac 12},b,c,dq^k,
aq^{-k}/c,aq^{1+M}/b,aq^{-M}/d\notag\\
a^{\frac 12},-a^{\frac 12},aq/b,aq/c,aq^{1-k}/d,
cq^{1+k},bq^{-M},dq^{1+M}\end{matrix}\,;q,q\right]\\
&=\frac {(aq/bc,cq/b,dq,dq/a;q)_M}
{(cdq/a,dq/c,q/b,aq/b;q)_M}\notag\\
&\quad\;\times
\frac{(cd/a,bd/a,cq,cq/a,dq^{1+M}/b,q^{-M};q)_k}
{(q,cq/b,d/a,d,bcq^{-M}/a,cdq^{1+M}/a;q)_k}\notag\\
&\quad\;\times
\frac{(q,q,aq,q/a,cdq/a,aq/cd,cq/d,dq/c;q)_{\infty}}
{(cq,q/c,dq,q/d,cq/a,aq/c,dq/a,aq/d;q)_{\infty}}.
\end{align}}
In \cite[p.~345]{S7} it was explained how to deduce Jackson's
$_8\phi_7$ summation in \eqref{eq:8phi7} and Bailey's $_6\psi_6$
summation in \eqref{eq:6psi6} from \eqref{eq:8psi8} by suitable
limiting cases and a polynomial argument (resp., analytic continuation).

Likewise, we can utilize multivariate ${}_6\psi_6$ summations associated to
root systems (see \cite[Section~2.4]{S8}) to obtain \textit{multilateral
  matrix inverses}
which in combination with existing multivariate Jackson summations
(see \cite[Section~2.3]{S8}) can be used to deduce
\textit{multivariate well-poised and balanced ${}_8\psi_8$ summations}.
For instance, Gustafson's $A_r$ $_6\psi_6$
summation from \cite[Theorem~1.15]{G} and Milne's $A_r$ $_8\phi_7$
summation from \cite[Theorem~6.17]{M1} served as ingredients
in the derivation of the following $A_r$ extension of \eqref{eq:8psi8},
derived in \cite[Theorem~4.1]{S7}:

\textit{Let $a,b,c_1\ldots,c_j,d,x_1\ldots,x_r$ be indeterminates,
  let $k_1,\ldots,k_r$ be arbitrary integers and $M$ a non-negative integer.
  Then}
\begin{align}\label{eq:ar8psi8}
\sum_{n_1,\dots,n_r=-\infty}^{\infty}&\left(
\prod_{1\le i<j\le r}\frac {x_iq^{n_i}-x_jq^{n_j}}{x_i-x_j}
\prod_{i=1}^r\frac{1-ax_iq^{n_i+|n|}}{1-ax_i}
\prod_{i,j=1}^r\frac{(c_jx_i/x_j;q)_{n_i}}
{(q^{1+k_j}c_jx_i/x_j;q)_{n_i}}\right.\notag\\
&\left.\times
\prod_{i=1}^r\frac{(ax_iq^{-k_i}/c_i;q)_{|n|}\,
(bx_i,ax_iq^{-M}/d;q)_{n_i}}
{(ax_iq/c_i;q)_{|n|}\,
(bx_iq^{-M},ax_iq^{1-|k|}/d;q)_{n_i}}\cdot
\frac{(dq^{|k|},aq^{1+M}/b;q)_{|n|}}
{(dq^{1+M},aq/b;q)_{|n|}}\,q^{|n|}\right)\notag\\
&=\prod_{i,j=1}^r\frac{(qc_jx_i/c_ix_j,qx_i/x_j;q)_{\infty}}
{(qc_jx_i/x_j,qx_i/c_ix_j;q)_{\infty}}
\prod_{i=1}^r\frac{(ax_iq,q/ax_i,ax_iq/c_id,c_idq/ax_i;q)_{\infty}}
{(ax_iq/c_i,c_iq/ax_i,ax_iq/d,dq/ax_i;q)_{\infty}}\notag\\
&\quad\;\times
\frac{(dq/C,Cq/d;q)_{\infty}}{(dq,q/d;q)_{\infty}}\,
\frac{(dq,aq/bC;q)_M}{(aq/b,dq/C;q)_M}\,
\prod_{i=1}^r\frac{(c_iq/bx_i,dq/ax_i;q)_M}{(c_idq/ax_i,q/bx_i;q)_M}\notag\\
&\quad\;\times
\frac{(bd/a,q^{-M};q)_{|k|}}
{(d,bCq^{-M}/a;q)_{|k|}}
 \prod_{i,j=1}^r\frac{(qc_jx_i/x_j;q)_{k_i}}
{(qc_jx_i/c_ix_j;q)_{k_i}}\notag\\
&\quad\;\times
\prod_{i=1}^r\frac{(c_id/ax_i;q)_{|k|}\,
(c_iq/ax_i,c_idq^{1+M}/bCx_i;q)_{k_i}}
{(d/ax_i;q)_{|k|}\,(c_iq/bx_i,c_idq^{1+M}/ax_i;q)_{k_i}}.
\end{align}
Just as in the $r=1$ case, this result can be used to deduce
(the two aforementioned ingredients, namely) Milne's $A_r$ $_8\phi_7$
summation and Gustafson's $A_r$ $_6\psi_6$ summation by suitable
limiting cases and a polynomial argument
(resp., repeated application of analytic continuation),
see \cite[Remark~4.2]{S7}.
The paper \cite{S7} contains, in addition to \eqref{eq:ar8psi8},
two other multivariate $_8\psi_8$ summations that extend the summation
\eqref{eq:8psi8} to multi-sums.

Given this line of results, one may wonder if one might be able to
similarly obtain results for ordinary (not basic) hypergeometric
series by using Dougall's $_2H_2$ summation formula in \eqref{eq:2H2}
as a starting point (instead of Bailey's $_6\psi_6$ summation
formula in \eqref{eq:6psi6}) in combination with other results.
This is indeed the case.
We have the following new result:
\begin{theorem}[A bilateral matrix inverse]\label{thm:bmi}
  Let $a,c\in\C\setminus\Z$.
  The infinite matrices 
$F=(f_{nk})_{n,k\in\mathbb Z}$ and $G=(g_{kl})_{k,l\in\mathbb Z}$ are
inverses of each other where
\begin{subequations}
\begin{equation}\label{eq:bmfnk}
f_{nk}=\frac{\Ga(1+a-c)}
{\Ga(1+a+n+k,1-c-n-k)}
\end{equation}
and
\begin{equation}
g_{kl}=\frac{\Ga(1+c-a)}
{\Ga(1+c+l+k,1-a-l-k)}.
\end{equation}
\end{subequations}
\end{theorem}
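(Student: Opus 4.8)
The plan is to verify directly the two relations that make $F$ and $G$ mutually inverse, $\sum_{k\in\Z}f_{nk}g_{kl}=\delta_{nl}$ and $\sum_{k\in\Z}g_{nk}f_{kl}=\delta_{nl}$, and to observe at the outset that the second one is not a separate computation. Indeed, $f_{nk}$ depends on $n,k$ only through $n+k$ and $g_{kl}$ only through $k+l$, so both matrices are symmetric in their two indices; moreover interchanging $a\leftrightarrow c$ carries $f_{nk}$ to $g_{nk}$ and $g_{kl}$ to $f_{kl}$. Consequently $\sum_{k\in\Z}g_{nk}f_{kl}$ is obtained from $\sum_{k\in\Z}f_{nk}g_{kl}$ by the substitution $a\leftrightarrow c$, under which the right-hand side $\delta_{nl}$ is unchanged, so it suffices to treat the first sum.

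Next I would pull the constants $\Ga(1+a-c)$ and $\Ga(1+c-a)$ out of the sum, leaving
\[
\sum_{k\in\Z}\frac{1}{\Ga(1+a+n+k)\,\Ga(1-c-n-k)\,\Ga(1+c+l+k)\,\Ga(1-a-l-k)}.
\]
Referring each reciprocal gamma factor to its value at $k=0$, I would use $\Ga(\alpha+k)=\Ga(\alpha)\,(\alpha)_k$ for the two factors whose argument increases in $k$, and the reflection formula $\Ga(\alpha-k)=\Ga(\alpha)\,(-1)^k/(1-\alpha)_k$ for the two factors whose argument decreases in $k$ (those coming from $\Ga(1-c-n-k)$ and $\Ga(1-a-l-k)$; cf.\ the standard identities for shifted factorials). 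The two signs $(-1)^k$ cancel, and the displayed sum becomes
\[
\frac{1}{\Ga(1+a+n,1+c+l,1-c-n,1-a-l)}\,\sum_{k\in\Z}\frac{(a+l,c+n)_k}{(1+a+n,1+c+l)_k},
\]
where the remaining sum is the bilateral series ${}_2H_2\!\left[\begin{matrix}a+l,\,c+n\\1+a+n,\,1+c+l\end{matrix}\,;1\right]$.

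The next step is to invoke Dougall's ${}_2H_2$ summation \eqref{eq:2H2}. Its convergence hypothesis holds identically here: for these parameters the quantity ``$c+d-a-b-1$'' equals $(1+a+n)+(1+c+l)-(a+l)-(c+n)-1=1$, which is positive, and this also supplies the absolute convergence that legitimizes the rearrangements above. Substituting the closed form of \eqref{eq:2H2} and simplifying, the four gamma factors picked up in the rewriting cancel against four of the five gamma factors in the numerator of the right-hand side of \eqref{eq:2H2} (the fifth being $\Ga(1)=1$), while $\Ga(1+a-c)$ and $\Ga(1+c-a)$ cancel the prefactor pulled out at the start; what survives is
\[
\sum_{k\in\Z}f_{nk}g_{kl}=\frac{1}{\Ga(1+n-l)\,\Ga(1+l-n)}.
\]
Since $1/\Ga$ vanishes at the non-positive integers, the right-hand side equals $1$ when $n=l$ and $0$ otherwise, i.e.\ it is $\delta_{nl}$; this completes the verification, and the second inverse relation then follows by the $a\leftrightarrow c$ symmetry noted above.

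I do not anticipate a real obstacle: the computation is essentially careful bookkeeping with gamma functions. The point that requires the most attention is the treatment of the two gamma factors whose arguments decrease in $k$ --- getting the reflection identity and the accompanying signs exactly right, so that the bilateral series that emerges is genuinely a ${}_2H_2$ evaluated at argument $1$ (and thus directly summable by \eqref{eq:2H2}) rather than at some other point. A secondary, easily handled point is the justification of absolute convergence of the bilateral sum, which here is automatic because the relevant ``balance'' quantity $c+d-a-b-1$ equals $1$.
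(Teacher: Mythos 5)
Your proposal is correct and follows essentially the same route as the paper: reduce to the single relation $\sum_{k}f_{nk}g_{kl}=\delta_{nl}$ via the $a\leftrightarrow c$ symmetry, rewrite the reciprocal gamma factors as shifted factorials to obtain the ${}_2H_2$ with upper parameters $a+l,\,c+n$ and lower parameters $1+a+n,\,1+c+l$, and apply Dougall's summation \eqref{eq:2H2}, the Kronecker delta emerging from $\Ga(1+n-l,1+l-n)^{-1}$. Your explicit check that the balance quantity equals $1$ (ensuring convergence) is a point the paper only asserts in passing.
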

The matrix inverse in Theorem~\ref{thm:bmi} (which does \textit{not}
appear to follow from \eqref{eq:qmi} by a direct limit)
can be recast as follows:

\begin{theoremp}{thm:bmi}[A bilateral matrix inverse]\label{thm:bmip}
Let $a,c\in\C\setminus\Z$ and the entries of the infinite matrix
  $F=(f_{nk})_{n,k\in\mathbb Z}=(f_{nk}(a,c\,|q))_{n,k\in\mathbb Z}$
  be given as in \eqref{eq:bmfnk}. Then the inverse of $F$ is the matrix
  $F^{-1}=(f_{nk}(c,a\,|q))_{n,k\in\mathbb Z}$.
\end{theoremp}
\begin{remark}
  In this remark, we shall write
  $f_{n,k}$ and $g_{k,l}$, instead of $f_{nk}$ and $g_{kl}$,
respectively, in order to better distinguish the two indices.
Since
 $$
\sum_{k\in\mathbb Z}f_{n,k}g_{k,l}=\delta_{n,l}=\delta_{-n,-l}=
\sum_{k\in\mathbb Z}f_{-n,k}g_{k,-l},
$$
it is clear that Theorem~\ref{thm:bmi} is equivalent to the
following assertion (where $\tilde{f}_{nk}=f_{-n,k}$ and
$\tilde{g}_{kl}=g_{k,-l}$):

\textit{Let $a,c\in\mathbb C\setminus\mathbb Z$.
  The infinite matrices 
  $\tilde{F}=(\tilde{f}_{nk})_{n,k\in\mathbb Z}$ and
  $\tilde{G}=(\tilde{g}_{kl})_{k,l\in\mathbb Z}$ are
inverses of each other where}
\begin{subequations}\label{eq:bmfnkv1}
\begin{equation}
\tilde{f}_{nk}=\frac{\Gamma(1+a)}
{\Gamma(1+a-n+k,1-c+n-k)}
\end{equation}
 \textit{and}
\begin{equation}
\tilde{g}_{kl}=\frac{\Gamma(1-a)}
{\Gamma(1+c-l+k,1-a+l-k)}.
\end{equation} 
\end{subequations}
Letting $c\to 0$ in this matrix inverse we obtain
\begin{subequations}\label{eq:bmfnkv2}
\begin{equation}
\tilde{f}_{nk}=(-1)^{n-k}\frac{(-a)_{n-k}}
{(n-k)!}
\end{equation}
and
\begin{equation}
\tilde{g}_{kl}=(-1)^{k-l}\frac{(a)_{k-l}}
{(k-l)!}.
\end{equation}
\end{subequations}
This is a matrix inverse involving lower-triangular matrices
that one can easily deduce
from the classical Vandermonde convolution formula
$$
\binom{a+b}{n}=\sum_{k=0}^n\binom ak\binom b{n-k}
$$
(which can be seen to be a special case of Dougall's $_2H_2$ summation
formula used to prove Theorem~\ref{thm:bmi}).
The matrix inverse in \eqref{eq:bmfnkv2}
is extracted from this summation,
or rather the orthogonality relation obtained from it,
by replacing $n$ by $n-l$ and letting $b\to -a$
(the product side then becoming $\delta_{n,l}$).
This means that the matrix inverse in
Theorem~\ref{thm:bmi} can be viewed as a bilateral
extension of the matrix inverse in \eqref{eq:bmfnkv2}.
A similar remark applies to our multivariate extension of
Theorem~\ref{thm:bmi} in Theorem~\ref{thm:mmi} which can be viewed
as an extension of a matrix inversion result that holds for
a pair of lower-triangular matrices (that are
indexed by a pair of multi-indices).
\end{remark}

\begin{remark}
A referee has enquired about the relation of the matrix inverse
stated in Theorem~\ref{thm:bmi} and Bressoud's matrix inverse \cite{Br},
and its specializations such as Andrews' matrix inverse (cf.\ \cite{A2}).

The Bressoud matrix inverse can be regarded as a ``well-poised''
matrix inverse and can be stated as follows:
\textit{Let $a,b\in\mathbb C$ such that the following expressions
  have no poles. Then the infinite matrices 
$F=(f_{nk})_{n,k\in\mathbb Z}$ and $G=(g_{kl})_{k,l\in\mathbb Z}$ are
\textit{inverses} of each other where}
\begin{subequations}\label{eq:bmfnkv3}
\begin{equation}
f_{nk}=\frac{1-bq^{2n}}{1-bq^{2k}}\frac{(b;q)_{n+k}(b/a;q)_{n-k}}
{(aq;q)_{n+k}(q;q)_{n-k}}a^{n-k}
\end{equation}
and
\begin{equation}
g_{kl}=\frac{1-aq^{2k}}{1-aq^{2l}}\frac{(a;q)_{k+l}(a/b;q)_{k-l}}
{(bq;q)_{k+l}(q;q)_{k-l}}b^{k-l}.
\end{equation}
\end{subequations}
Bressoud's matrix inverse \eqref{eq:bmfnkv3}
exhibits an $a\leftrightarrow b$ symmetry as observed by Bressoud
in his paper. Bressoud's result (which has been considerably further
generalized by Krattenthaler~\cite{Kr} using an operator method)
can be directly extracted from the
terminating very-well-poised $_6\phi_5$ summation
(cf.\ \cite[Equation (II.21)]{GR}). A relevent special case of
\eqref{eq:bmfnkv3} is Andrew's matrix inverse (cf.\ \cite{A2})
used in the Bailey transform. That matrix inverse can
be obtained by taking the limit $b\to 0$ in Bressoud's matrix inverse,
or by directly extracting the result from the terminating
very-well-poised $_4\phi_3$ summation
\cite[$c\to aq/b$ in Equation (II.21)]{GR}.
However, there is also another important limit of Bressoud's matrix inverse,
obtained by replacing $a$ and $b$ by $at$ and $bt$, respectively,
and subsequently letting $t\to 0$. This leads to a simple
matrix inverse that can be directly extracted from
the terminating $q$-Chu--Vandermonde summation
(cf.\ \cite[Equation (II.6)]{GR}).
A suitable $q\to 1$ limit of that matrix inverse is exactly
the result that is ``bilateralized'' in Theorem 2.
\end{remark}

In Theorem~\ref{thm:mmi}, which is our multivariate extension
of Theorem~\ref{thm:bmi},
we do not have the ($a\leftrightarrow c$) symmetry that prevails
in Theorem~\ref{thm:bmip}.
For this reason we decided here to use Theorem~\ref{thm:bmi}
(that uses two matrices $F$ and $G$) as the primary formulation
of our bilateral inversion result, instead of the more elegant
formulation in Theorem~\ref{thm:bmip}.
\begin{proof}[Proof of Theorem~\ref{thm:bmi}]
  It is enough to show the relation $\sum_{k\in\Z}f_{nk}g_{kl}=\delta_{n,l}$
  (as the dual relation $\sum_{k\in\Z}g_{nk}f_{kl}=\delta_{n,l}$ is
  just the same with $a$ and $c$ interchanged). We have
\begin{align*}
  \sum_{k\in\Z}f_{nk}g_{kl}
  &=\sum_{k=-\infty}^\infty\frac{\Ga(1+a-c)}
{\Ga(1+a+n+k,1-c-n-k)}\frac{\Ga(1+c-a)}{\Ga(1+c+l+k,1-a-l-k)}\\
  &=\frac{\Ga(1+a-c,1+c-a)}{\Ga(1+a+n,1-c-n,1+c+l,1-a-l)}
    \sum_{k=-\infty}^\infty\frac{(a+l)_k(c+n)_k}{(1+a+n)_k(1+c+l)_k}\\
  &=\frac{\Ga(1+a-c,1+c-a)}{\Ga(1+a+n,1-c-n,1+c+l,1-a-l)}\\
  &\quad\;\times\frac{\Ga(1-a-l,1-c-n,1+c+l,1+a+n,1)}
    {\Ga(1+c-a,1+l-n,1+n-l,1+a-c)}\\
  &=\frac 1{\Ga(1+l-n,1+n-l)}=\delta_{n,l},
\end{align*}
where we have used the $(a,b,c,d)\mapsto(a+l,c+n,1+c+l,1+a+n)$ case
of Dougall's $_2H_2$ summation formula in \eqref{eq:2H2}
(which converges absolutely for the specified choice of parameters),
and the fact that the product of gamma functions
$\Ga(1+l-n,1+n-l)^{-1}$ vanishes unless $n=l$.
\end{proof}

Now, if we suitably choose $b_k$ and $a_l$ such that \eqref{eq:gba}
holds by, say, the Pfaff--Saalsch\"utz ${}_3F_2$ summation in \eqref{eq:3F2},
then the inverse relation \eqref{eq:fab}
automatically must be true. This procedure gives
us the following $2$-balanced \textit{${}_3H_3$ summation}:
\begin{proposition}[An $_3H_3$ summation]\label{prop:3H3}
  Let $a,b,c$ be indeterminates, let $k$ be an arbitrary integer and $M$
  be a non-negative integer. Then
\begin{align}\label{eq:3H3}
{}_3H_3\!\left[\begin{matrix}a,\,c+k,\,1+c-b+M\\
1+a+k,\,1+c+M,\,1+c-b\end{matrix}\,;1\right]=
\frac{\Ga(1+a,1-a,1+c,1-c)}
{\Ga(1+a-c,1+c-a)}&\notag\\
\times\frac{(-M,1+a,b)_k}{(1,c,a+b-c-M)_k}
\frac{(1+c,1+c-a-b)_M}{(1+c-a,1+c-b)_M}&.
\end{align}
\end{proposition}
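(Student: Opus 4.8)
The plan is to use the inverse relations \eqref{eq:fg} attached to the bilateral matrix inverse of Theorem~\ref{thm:bmi}: I will pick the sequence $b_k$ (in the notation of \eqref{eq:fg}) so that \eqref{eq:gba} becomes an instance of the Pfaff--Saalsch\"utz summation \eqref{eq:3F2}; the equivalence in \eqref{eq:fg} then forces \eqref{eq:fab}, and \eqref{eq:fab}, once the series is identified and the prefactors are tidied up, is exactly \eqref{eq:3H3}. The choice that does the job is
\[
b_k=(-1)^k\,\frac{(b,-M)_k}{(1,a+b-c-M)_k},
\]
which is supported on $k\in\{0,1,\dots,M\}$ because of the $(1)_k=k!$ and the $(-M)_k$; the companion sequence $a_l$ will be read off from \eqref{eq:gba}.

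First I would pull the $k=0$ factor out of the entry $g_{kl}$ of Theorem~\ref{thm:bmi}; using $\Ga(w)/\Ga(w-k)=(-1)^k(1-w)_k$ this gives
\[
g_{kl}=\frac{\Ga(1+c-a)}{\Ga(1+c+l,1-a-l)}\,\frac{(-1)^k(a+l)_k}{(1+c+l)_k},
\]
so that $\sum_{k\in\Z}g_{kl}b_k$ collapses to a terminating ${}_3F_2$ at argument $1$ with upper parameters $a+l,\,b,\,-M$ and lower parameters $1+c+l,\,a+b-c-M$. The Saalsch\"utzian balancing condition $(1+c+l)+(a+b-c-M)=(a+l)+b+1-M$ holds identically in $l$, so \eqref{eq:3F2} applies; simplifying the resulting shifted factorials (peeling the $l$-dependence off $(1+c+l)_M$ and $(1+c+l-b)_M$ via $(\alpha+l)_M=(\alpha)_M(\alpha+M)_l/(\alpha)_l$) yields
\[
a_l=K\,(-1)^l\,\frac{(a,1+c-b+M)_l}{(1+c-b,1+c+M)_l},\qquad
K=\frac{\Ga(1+c-a)}{\Ga(1+c,1-a)}\,\frac{(1+c-a,1+c-b)_M}{(1+c,1+c-a-b)_M}.
\]

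Since Theorem~\ref{thm:bmi} supplies both $\sum_k f_{nk}g_{kl}=\delta_{nl}$ and $\sum_k g_{nk}f_{kl}=\delta_{nl}$, and since all the bilateral sums occurring are absolutely convergent — the summands of $\sum_n g_{jn}f_{nk}$ and of the target ${}_3H_3$ behave like $O(n^{-2})$ as $n\to\pm\infty$, as the reflection formula for $\Ga$ shows — the equivalence in \eqref{eq:fg} delivers \eqref{eq:fab} for these $a_n$ and $b_k$. Writing $f_{nk}=f_{0k}\,(-1)^n(c+k)_n/(1+a+k)_n$ with $f_{0k}=\Ga(1+a-c)/\Ga(1+a+k,1-c-k)$, the left-hand side $\sum_{n\in\Z}f_{nk}a_n$ equals $f_{0k}K$ times exactly the ${}_3H_3$ on the left of \eqref{eq:3H3}; equating it to $b_k$ and dividing gives the value $b_k/(f_{0k}K)$, and a last round of Gamma bookkeeping (using $\Ga(1-c-k)=\Ga(1-c)(-1)^k/(c)_k$ and $\Ga(1+a+k)=\Ga(1+a)(1+a)_k$) turns this into the right-hand side of \eqref{eq:3H3}. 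One also checks in passing that this ${}_3H_3$ is genuinely $2$-balanced — the sum of its lower parameters exceeds the sum of its upper parameters by $2$ — which is what justifies the absolute convergence invoked above and accounts for the name.

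I expect the real work to be organizational rather than conceptual: the repeated passage between bilateral $\Ga$-quotient sums and standard terminating ${}_3F_2$'s, and the careful matching of the three prefactors $g_{0l}$, $f_{0k}$, $K$ together with the signs $(-1)^k,(-1)^l,(-1)^n$ generated by the $\Ga$-reflection identities, so that the whole thing collapses to \eqref{eq:3H3}. Throughout one keeps the genericity hypotheses in force ($a,c\notin\Z$, so Theorem~\ref{thm:bmi} is available, and $M\in\N_0$, so the factor $(-M)_k$ truncates the ${}_3F_2$); the identity of meromorphic functions then follows on the full parameter domain.
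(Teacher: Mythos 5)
Your proposal is correct and follows essentially the same route as the paper's proof: the identical choice of $b_k$, the derivation of $a_l$ from the $(a,b,c,m)\mapsto(a,b,1+c+l,M)$ case of the Pfaff--Saalsch\"utz summation, and the inverse relation \eqref{eq:fab} from Theorem~\ref{thm:bmi} yielding \eqref{eq:3H3} after the Gamma-function bookkeeping. Your constant $K$ even carries the factor $(1+c-b)_M/(1+c-a-b)_M$ that is needed for \eqref{eq:gba} to hold exactly and for the right-hand side of \eqref{eq:3H3} to come out as stated, but which is missing from the prefactor of the $a_l$ displayed in the paper's proof.
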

\begin{proof}
We apply Theorem~\ref{thm:bmi} in conjunction with the equivalence
of the two relations \eqref{eq:fab} and \eqref{eq:gba}.
Choosing
$$
a_l=\frac{\Ga(1+c-a)}{\Ga(1-a,1+c)}\frac{(1+c-a)_M}{(1+c)_M}
\frac{(a,1+c-b+M)_l\,(-1)^l}{(1+c-b,1+c+M)_l}
$$
and
$$
b_k=\frac{(b,-M)_k\,(-1)^k}{(1,a+b-c-M)_k},
$$
we see that the relation \eqref{eq:gba} holds by the
$(a,b,c,m)\mapsto(a,b,1+c+l,M)$ case of the
Pfaff--Saalsch\"utz ${}_3F_2$ summation in \eqref{eq:3F2}.
Thus the inverse relation \eqref{eq:fab} (which converges absolutely)
must be true, which after some rewriting is \eqref{eq:3H3}.
\end{proof}
\begin{remark}\label{rem:2l}
Two special cases of Proposition~\ref{prop:3H3} are worth pointing out:
\begin{enumerate}
\item
  If $a\to -k$ ($k$ being a non-negative integer),
  then the bilateral series in \eqref{eq:3H3}
gets truncated from below and from above so that the sum is finite. By
a polynomial argument, $M$ can then be replaced by any complex number.
If we replace $M$ by $A+b-c-1$, then perform the simultaneous substitution
$b\mapsto C-B$ and $c\mapsto A-k$ we obtain the
Pfaff--Saalsch\"utz summation in \eqref{eq:3F2}
(subject to the substitution $(a,b,c,m)\mapsto(A,B,C,k)$).

\item
  If, in \eqref{eq:3H3}, we formally let $M\to\infty$
  (which can be justified by appealing to Tannery's theorem),
  and rewrite the products on the right-hand side which are of the
  form $(x)_k$ as $\Ga(x+k)/\Ga(x)$, we can apply analytic continuation
to replace $k$ by $B-c$ (in order to relax the integrality condition
of $k$) where $B$ is a new complex parameter. We then obtain
an identity with four free parameters that can be seen to be
equivalent to Dougall's ${}_2H_2$ summation in \eqref{eq:2H2}
after substitution of parameters.
\end{enumerate}
\end{remark}

For the derivation of our multivariate extensions of
Theorems~\ref{thm:bmi} and Proposition~\ref{prop:3H3} we shall
make use of the following \textit{$A_r$ Pfaff--Saalsch\"utz summation}
(which can be obtained as a suitable $q\to 1$ limit of a result by
Milne \cite[Theorem~4.15]{M2}).
\begin{align}\label{eq:ar3F2}
\sum_{\substack{k_1,\dots,k_r\ge0\\|k|\le M}}\Bigg(
\prod_{1\le i<j\le r}\frac{x_i+k_i-x_j-k_j}{x_i-x_j}
\prod_{i,j=1}^r\frac{(a_j+x_i-x_j)_{k_i}}{(1+x_i-x_j)_{k_i}}
\prod_{i=1}^r\frac{(b+x_i)_{k_i}}{(c+x_i)_{k_i}}&\notag\\
\times
\frac{(-M)_{|k|}}{(a_1+\cdots+a_r+b+1-M-c)_{|k|}}&\Bigg)\notag\\
=\frac{(c-b)_M}{(c-a_1-\cdots-a_r-b)_M}
\prod_{i=1}^r\frac{(c+x_i-a_i)_M}{(c+x_i)_M}&.
\end{align}

Further, we shall make use of Gustafson's
\textit{$A_r$ extension of Dougall's ${}_2H_2$ summation}
\cite[Theorem~1.11]{G}.
\begin{align}\label{eq:ar2H2}
&\sum_{k_1,\dots,k_r=-\infty}^\infty
\prod_{1\le i<j\le r}\frac{x_i+k_i-x_j-k_j}{x_i-x_j}
\prod_{i=1}^r\prod_{j=1}^{r+1}\frac{(a_j+x_i)_{k_i}}{(b_j+x_i)_{k_i}}\notag\\
&=\frac{\Gamma\big({-}r+\sum_{j=1}^{r+1}(b_j-a_j)\big)
\prod_{i=1}^r\prod_{j=1}^{r+1}\Gamma(1-a_j-x_i,b_j+x_i)}
{\prod_{i,j=1}^{r+1}\Gamma(b_j-a_i)
\prod_{i,j=1}^{r}\Gamma(1+x_j-x_i)},
\end{align}
provided $\Re\big({-}r+\sum_{j=1}^{r+1}(b_j-a_j)\big)>0$.

A special case of \eqref{eq:ar2H2}
is the following \textit{$A_r$ ${}_1H_1$ summation}
that extends \eqref{eq:1H1}.
\begin{align}
\sum_{k_1,\dots,k_r=-\infty}^\infty
\prod_{1\le i<j\le r}\frac{x_i+k_i-x_j-k_j}{x_i-x_j}
\prod_{i,j=1}^r\frac{(a_j+x_i)_{k_i}}{(b_j+x_i)_{k_i}}
\cdot z^{|k|}&\notag\\
=\frac{(1-z)^{-r+\sum_{j=1}^r(b_j-a_j)}}{(-z)^{{-}r+\sum_{j=1}^r(b_j+x_j)}}
\prod_{i,j=1}^r\frac{\Gamma(1-a_j-x_i,b_j+x_i)}
{\Gamma(b_j-a_i,1+x_j-x_i)}&,
\end{align}
provided $|z|=1$, $z\ne 1$, and $\Re\big({-}r+\sum_{j=1}^{r}(b_j-a_j)\big)>0$.

As a consequence of Gustafson's $A_r$ ${}_2H_2$ summation the following holds:

\begin{theorem}[An $A_r$ multilateral matrix inverse]\label{thm:mmi}
Let $a_1,\ldots,a_r,c,x_1,\ldots,x_r$ be indeterminates. Then the infinite matrices
$F=(f_{nk})_{n,k\in\mathbb Z^r}$ and $G=(g_{kl})_{k,l\in\mathbb Z^r}$ are
\textit{inverses} of each other where
\begin{equation*}
f_{nk}=
\prod_{i=1}^r\frac{\Ga(1+a_i+n_i-|n|-c)}{\Ga(1-c-|n|-x_i-k_i)}
\prod_{i,j=1}^r\frac1{\Ga(1+a_j+n_j+x_i+k_i)}
\end{equation*}
and
\begin{align*}
g_{kl}={}&\prod_{1\le i<j\le r}\frac{x_i+k_i-x_j-k_j}{x_i-x_j}
\prod_{i=1}^r\frac{\Ga(1+c+|l|-l_i-a_i)}
{\Ga(1+c+|l|+x_i+k_i)}\\
&\times(-1)^{(r-1)|k|}\prod_{i,j=1}^r\frac{\Ga(1+x_j-x_i,1+a_j-a_i+l_j-l_i)}
{\Ga(1-a_j-l_j-x_i-k_i)}.
\end{align*}
\end{theorem}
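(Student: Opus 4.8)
\textit{Proof strategy.} The plan is to establish the two defining relations of a matrix inverse, $\sum_{k\in\Z^r}f_{nk}g_{kl}=\delta_{n,l}$ and $\sum_{k\in\Z^r}g_{nk}f_{kl}=\delta_{n,l}$, by recognizing each $r$-fold sum --- after stripping off the factors that do not depend on the summation index $k$ --- as an instance of Gustafson's $A_r$ extension of Dougall's ${}_2H_2$ summation \eqref{eq:ar2H2}. This mirrors exactly the way \eqref{eq:2H2} entered the proof of Theorem~\ref{thm:bmi}; the essential new difficulty is that Theorem~\ref{thm:mmi} has no $a\leftrightarrow c$ symmetry, so --- unlike in the univariate case --- the second relation cannot be read off from the first and must be treated separately.

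For $\sum_{k}f_{nk}g_{kl}=\delta_{n,l}$ I would fix $n,l$, pull every $k$-free factor out of the sum, and then turn each surviving $k$-dependent gamma function into an $A_r$-type shifted factorial using $\Gamma(y+x_i+k_i)=(y+x_i)_{k_i}\,\Gamma(y+x_i)$ together with $(y)_{-k_i}=(-1)^{k_i}/(1-y)_{k_i}$. The powers of $-1$ thereby generated --- a factor $(-1)^{|k|}$ from $\Gamma(1-c-|n|-x_i-k_i)^{-1}$ and a factor $(-1)^{r|k|}$ from $\prod_{i,j}\Gamma(1-a_j-l_j-x_i-k_i)^{-1}$ --- combine with the explicit $(-1)^{(r-1)|k|}$ in $g_{kl}$ to give $(-1)^{2r|k|}=1$, so that the surviving sum is precisely the left-hand side of \eqref{eq:ar2H2} with upper parameters $c+|n|,\,a_1+l_1,\dots,a_r+l_r$ and lower parameters $1+c+|l|,\,1+a_1+n_1,\dots,1+a_r+n_r$. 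These satisfy $-r+\sum_{j=1}^{r+1}(b_j-a_j)=1$, so the leading factor $\Gamma\big(-r+\sum(b_j-a_j)\big)$ in \eqref{eq:ar2H2} is $\Gamma(1)=1$; moreover the $x$-dependent products on the right-hand side of \eqref{eq:ar2H2} cancel against the extracted factors, and so do the $\Gamma$-blocks involving $c$ or $|n|$ inside $\prod_{i,j=1}^{r+1}\Gamma(b_j-a_i)$. After these routine cancellations one is left with
\[
\sum_{k\in\Z^r}f_{nk}g_{kl}
=\frac1{\Gamma(1+|l|-|n|)}\,\prod_{i=1}^r\frac1{\Gamma(1+n_i-l_i)}\,
\prod_{i\ne j}\frac{\Gamma(1+a_j-a_i+l_j-l_i)}{\Gamma(1+a_j-a_i+n_j-l_i)}.
\]
Now $\Gamma(1+n_i-l_i)^{-1}$ vanishes unless $n_i\ge l_i$ for every $i$, while $\Gamma(1+|l|-|n|)^{-1}$ vanishes unless $|l|\ge|n|$; these two requirements are compatible only when $n=l$, and at $n=l$ the right-hand side equals $1$. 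Hence the left-hand side is $\delta_{n,l}$.

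The second relation $\sum_{k}g_{nk}f_{kl}=\delta_{n,l}$ is where I expect the main obstacle. The same scheme applies, but now the Vandermonde product $\prod_{i<j}(x_i+k_i-x_j-k_j)$ demanded by the left-hand side of \eqref{eq:ar2H2} is not present outright: it has to be manufactured from the product $\prod_{i,j}\Gamma(1+a_j-a_i+k_j-k_i)$ carried by $g_{nk}$ by applying the reflection formula $\Gamma(1+z)\Gamma(1-z)=\pi z/\sin\pi z$ pairwise, so that in this direction it is the $a_i$, rather than the $x_i$, that play the role of the position variables of \eqref{eq:ar2H2}; this is also the step that consumes the explicit sign $(-1)^{(r-1)|k|}$ of $g$. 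The delicate point is that several of the gamma factors of $f_{kl}$ (for instance $\Gamma(1+a_i+k_i-|k|-c)$) and of $g_{nk}$ couple all of the summation indices through $|k|$, so bringing the summand into the precise shape required by \eqref{eq:ar2H2} takes some care and a further use of the reflection formula to pair such factors off. An alternative, and perhaps cleaner, route is to avoid the second direct computation altogether: specialising the parameters so that $F$ and $G$ collapse to the pair of lower-triangular (multi-index) matrices alluded to in the remark following Theorem~\ref{thm:bmi}, one has $FG=I\iff GF=I$ for that specialisation, hence $GF=I$ there; and since every entry of $GF-I$ is an absolutely convergent $r$-fold series, hence meromorphic in the parameters, the identity $GF=I$ propagates from that specialisation to generic parameters by analytic continuation. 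Either way, combined with the first relation this shows that $F$ and $G$ are inverses of each other.
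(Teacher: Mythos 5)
Your verification of the first orthogonality relation $\sum_{k\in\Z^r}f_{nk}g_{kl}=\delta_{n,l}$ is correct and coincides with the paper's proof in every particular: the same conversion of gamma quotients into shifted factorials, the same cancellation of signs, the same substitution into Gustafson's summation \eqref{eq:ar2H2} (upper parameters $a_j+l_j$ and $c+|n|$, lower parameters $1+a_j+n_j$ and $1+c+|l|$, so that $\Gamma\big({-}r+\sum(b_j-a_j)\big)=\Gamma(1)$), and the same final expression whose vanishing forces $n=l$; your argument combining $\Gamma(1+n_i-l_i)^{-1}$ with $\Gamma(1+|l|-|n|)^{-1}$ is exactly the intended one. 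The paper then simply asserts that the dual relation ``can be done analogously,'' so up to that point you and the paper agree.

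The gap is in your treatment of the dual relation $\sum_{k}g_{nk}f_{kl}=\delta_{n,l}$, and it is a real one. Your route (a) claims that after reflection-formula manipulations the sum acquires ``the precise shape required by \eqref{eq:ar2H2}'' with the $a_i$ as position variables. It does not: the summand of \eqref{eq:ar2H2} factors, apart from the Vandermonde, into functions of the individual $k_i$, whereas the dual sum retains genuine $|k|$-coupling that survives every pairing. Concretely, the pair $\Gamma(1+c-a_i+|k|-k_i)\,\Gamma(1+a_i-c+k_i-|k|)$ reflects to a \emph{linear factor} $(c-a_i+|k|-k_i)$ times a sign, producing a degree-$r$ polynomial $\prod_i(c-a_i+|k|-k_i)$ in the summation indices, and the factors $\Gamma(1+c+x_i+n_i+|k|)$ and $\Gamma(1-c-x_i-l_i-|k|)$ are \emph{not} a reflection pair (since $n_i\ne l_i$) and leave quotients $(c+x_i+l_i)_{|k|}/(1+c+x_i+n_i)_{|k|}$ depending on $|k|$. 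None of these fit the template of \eqref{eq:ar2H2}; the dual relation requires a different (companion) $A_r$ Dougall-type summation whose summand carries $|k|$-dependence, not the theorem you invoke. Your fallback route (b) also fails as stated: deducing $GF=I$ at a single specialization (where the matrices become multi-index lower-triangular) and then ``propagating by analytic continuation'' is not valid, because a meromorphic function of the specialized parameter that vanishes at one point (or on one subvariety) need not vanish identically; one would need the identity on a set with an accumulation point in each continued variable, or a Carlson-type argument, neither of which you supply. So while your instinct that the lack of $a\leftrightarrow c$ symmetry makes the second relation the hard part is exactly right, neither of your two proposed resolutions closes it.
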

\begin{proof}
We will show the relation $\sum_{k\in\Z^r}f_{nk}g_{kl}=\delta_{n,l}$.
(The verification of the dual relation $\sum_{k\in\Z^r}g_{nk}f_{kl}=\delta_{n,l}$
can be done analogously.) We have
\begin{align*}
  &\sum_{k\in\Z^r}f_{nk}g_{kl}
  =\sum_{k_1,\ldots,k_r=-\infty}^\infty
  \left(\prod_{i=1}^r\frac{\Ga(1+a_i+n_i-|n|-c)}{\Ga(1-c-|n|-x_i-k_i)}
\prod_{i,j=1}^r\frac1{\Ga(1+a_j+n_j+x_i+k_i)}\right.\\
&\qquad\qquad\qquad\qquad\qquad\quad\times
\prod_{1\le i<j\le r}\frac{x_i+k_i-x_j-k_j}{x_i-x_j}
\prod_{i=1}^r\frac{\Ga(1+c+|l|-l_i-a_i)}
{\Ga(1+c+|l|+x_i+k_i)}\\
&\left.\qquad\qquad\qquad\qquad\qquad\quad\times
(-1)^{(r-1)|k|}\prod_{i,j=1}^r\frac{\Ga(1+x_j-x_i,1+a_j-a_i+l_j-l_i)}
{\Ga(1-a_j-l_j-x_i-k_i)}\right)\\
&=\prod_{i,j=1}^r\frac{\Ga(1+x_j-x_i,1+a_j-a_i+l_j-l_i)}
{\Ga(1-a_j-l_j-x_i,1+a_j+n_j+x_i)} \\
  &\quad\;\times
    \prod_{i=1}^r
\frac{\Ga(1+a_i+n_i-|n|-c,1+c+|l|-l_i-a_i)}
{\Ga(1-c-|n|-x_i,1+c+|l|+x_i)}\\
&\times
\sum_{k_1,\ldots,k_r=-\infty}^\infty
\prod_{1\le i<j\le r}\!\!\frac{x_i+k_i-x_j-k_j}{x_i-x_j}
  \prod_{i,j=1}^r\frac{(a_j+l_j+x_i)_{k_i}}{(1+a_j+n_j+x_i)_{k_i}}
 \prod_{i=1}^r\frac{(c+|n|+x_i)_{k_i}}{(1+c+|l|+x_i)_{k_i}}\\
 &=\prod_{i,j=1}^r\frac{\Ga(1+x_j-x_i,1+a_j-a_i+l_j-l_i)}
{\Ga(1-a_j-l_j-x_i,1+a_j+n_j+x_i)} \\
&\quad\;\times\prod_{i=1}^r
\frac{\Ga(1+a_i+n_i-|n|-c,1+c+|l|-l_i-a_i)}
{\Ga(1-c-|n|-x_i,1+c+|l|+x_i)}\\
&\quad\;\times\frac 1{\Ga(1+|l|-|n|)}
\prod_{i,j=1}^r\frac{\Ga(1-a_j-l_j-x_i,1+a_j+n_j+x_i)}
{\Ga(1+x_j-x_i,1+a_j-a_i+n_j-l_i)}\\
&\quad\;\times
\prod_{i=1}^r\frac{\Ga(1-c-|n|-x_i,1+c+|l|+x_i)}
{\Ga(1+a_i+n_i-|n|-c,1+c+|l|-l_i-a_i)}\\
&=\frac 1{\Ga(1+|l|-|n|)}
\prod_{i,j=1}^r\frac{\Ga(1+a_j-a_i+l_j-l_i)}
{\Ga(1+a_j-a_i+n_j-l_i)}=\delta_{n,l},
\end{align*}
where we have used Gustafson's $A_r$ $_2H_2$ summation
formula in \eqref{eq:ar2H2} (which converges absolutely for the
specified choice of parameters) with respect to the
following simultaneous substitutions:
$a_j\mapsto a_j+l_j$ ($1\le j\le r$), $a_{r+1}=c+|n|$,
$b_j\mapsto 1+a_j+n_j$ ($1\le j\le r$), and $b_{r+1}=1+c+|l|$,
and further used the fact that the product
$\Ga(1+|l|-|n|)^{-1}\prod_{i,j=1}^r
\Ga(1+a_j-a_i+n_j-l_i)^{-1}$ vanishes unless $n=l$.
\end{proof}

\begin{theorem}[An $A_r$ ${}_3H_3$ summation]\label{thm:ar3H3}
Let $a_1,\ldots,a_r,b,c,x_1,\ldots,x_r$ be indeterminates,
$k_1,\ldots,k_r$ be arbitrary integers and $M$ a non-negative integer.
Then
\begin{align}\label{eq:ar3H3}
\sum_{n\in\Z^r}\prod_{1\le i< j\le r}\frac{a_i+n_i-a_j-n_j}{a_i-a_j}
\prod_{i=1}^r\frac{(1+c-a_i+M)_{|n|-n_i}(c+x_i+k_i)_{|n|}}
{(c-a_i)_{|n|-n_i}(1+c+x_i+M)_{|n|}}&\notag\\\times
\prod_{i,j=1}^r\frac{(a_j+x_i)_{n_j}}{(1+a_j+x_i+k_i)_{n_j}}\cdot
\frac{(1+c-b+M)_{|n|}}{(1+c-b)_{|n|}}&\notag\\
=\prod_{i,j=1}^r\frac{\Ga(1+a_j+x_i,1-a_j-x_i)}{\Ga(1+x_i-x_j,1+a_j-a_i)}
\prod_{i=1}^r\frac{\Ga(1+c+x_i,1-c-x_i)}{\Ga(1+c-a_i,1+a_i-c)}&\notag\\
\times\prod_{i,j=1}^r\frac{(1+a_j+x_i)_{k_i}}{(1+x_i-x_j)_{k_i}}
\prod_{i=1}^r\frac{(b+x_i)_{k_i}}{(c+x_i)_{k_i}}\cdot\frac{(-M)_{|k|}}
{\big(b-c-M+\sum_{j=1}^r(a_j+x_j)\big)_{|k|}}&\notag\\
\times\prod_{i=1}^r\frac{(1+c+x_i)_M}{(1+c-a_i)_M}\cdot
\frac{\big(1+c-b-\sum_{j=1}^r(a_j+x_j)\big)_M}{(1+c-b)_M}.
\end{align}
\end{theorem}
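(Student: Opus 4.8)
The plan is to prove Theorem~\ref{thm:ar3H3} in exactly the same way Proposition~\ref{prop:3H3} was proved, with the $A_r$ multilateral matrix inverse of Theorem~\ref{thm:mmi} in place of the bilateral matrix inverse of Theorem~\ref{thm:bmi}, and the $A_r$ Pfaff--Saalsch\"utz summation \eqref{eq:ar3F2} in place of the classical one. Writing $F=(f_{nk})$ and $G=(g_{kl})$ as in Theorem~\ref{thm:mmi}, the two inverse relations \eqref{eq:fab} and \eqref{eq:gba} --- now read for sequences indexed by $\mathbb Z^r$ and $r$-fold sums --- are equivalent. Hence it suffices to exhibit sequences $(a_l)_{l\in\mathbb Z^r}$ and $(b_k)_{k\in\mathbb Z^r}$ for which $\sum_{k\in\mathbb Z^r}g_{kl}b_k=a_l$ holds by \eqref{eq:ar3F2}; then $\sum_{n\in\mathbb Z^r}f_{nk}a_n=b_k$ follows automatically and, after rewriting, is \eqref{eq:ar3H3}.

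First I would determine $a_l$ and $b_k$ by matching $g_{kl}b_k$, after extracting the $k$-independent factors, against a constant multiple of the summand of \eqref{eq:ar3F2}. Applying $\Gamma(z+m)=(z)_m\Gamma(z)$ to the factors $1/\Gamma(1+c+|l|+x_i+k_i)$ and the reflection-type identity $\Gamma(z-m)=(-1)^m\Gamma(z)/(1-z)_m$ to the factors $1/\Gamma(1-a_j-l_j-x_i-k_i)$ of $g_{kl}$, the $k$-dependence of $g_{kl}$ collapses to $(-1)^{|k|}$ times the $A_r$ Vandermonde product $\prod_{1\le i<j\le r}\frac{x_i+k_i-x_j-k_j}{x_i-x_j}$, times $\prod_{i,j=1}^r(a_j+l_j+x_i)_{k_i}$, times $\prod_{i=1}^r 1/(1+c+|l|+x_i)_{k_i}$. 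This already carries the same Vandermonde factor that occurs in \eqref{eq:ar3F2}, so that part matches verbatim, and comparing the remaining Pochhammer symbols forces one to take, in \eqref{eq:ar3F2}, the parameters there to be $a_j+l_j+x_j$ ($1\le j\le r$), $b$, $1+c+|l|$, and $M$, together with a choice of $b_k$ assembled from $\prod_{i,j=1}^r 1/(1+x_i-x_j)_{k_i}$, $\prod_{i=1}^r(b+x_i)_{k_i}$, $(-M)_{|k|}$ and $1/(b-c-M+\sum_{j=1}^r(a_j+x_j))_{|k|}$ (times the sign $(-1)^{|k|}$ and an overall constant independent of $k$ and $l$). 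The point that makes this legitimate is that the $|l|$-terms cancel, both in the argument of this last $|k|$-Pochhammer and in the balancing condition of \eqref{eq:ar3F2}, so that $b_k$ is genuinely independent of $l$; moreover $b_k$ is supported on $\{k:k_i\ge 0,\ |k|\le M\}$ because of the diagonal factors $1/(1)_{k_i}=1/k_i!$ and the factor $(-M)_{|k|}$, so that $\sum_k g_{kl}b_k$ is a finite sum and \eqref{eq:ar3F2} applies with no convergence question. Evaluating it pins down $a_l$ explicitly.

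With $a_l$ and $b_k$ so chosen, the equivalence \eqref{eq:fab} $\Leftrightarrow$ \eqref{eq:gba} gives $\sum_{n\in\mathbb Z^r}f_{nk}a_n=b_k$ for free, once one checks that this $r$-fold bilateral series converges absolutely for generic parameters --- which, as in the $r=1$ case, follows from the asymptotics of the Gamma-function ratios in $f_{nk}a_n$, whose balancing forces geometric decay as $|n_i|\to\infty$. Then I would substitute the explicit $f_{nk}$ and $a_n$, apply $\Gamma(z+m)=(z)_m\Gamma(z)$ to the factors $1/\Gamma(1+a_j+n_j+x_i+k_i)$ and $\Gamma(z-m)=(-1)^m\Gamma(z)/(1-z)_m$ to the factors $1/\Gamma(1-c-|n|-x_i-k_i)$ and $\Gamma(1+a_i+n_i-|n|-c)$ (these produce precisely the Pochhammer symbols $(c+x_i+k_i)_{|n|}$, $1/(1+a_j+x_i+k_i)_{n_j}$ and $1/(c-a_i)_{|n|-n_i}$ that appear on the left-hand side of \eqref{eq:ar3H3}, along with signs), collect all $n$-free Gamma factors on the right-hand side, and simplify with standard Gamma-function identities; this turns $\sum_n f_{nk}a_n=b_k$ into \eqref{eq:ar3H3}.

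The main obstacle is purely the bookkeeping in these two matching steps: reading off the correct substitution into \eqref{eq:ar3F2}, and then keeping track of the large number of $k$- and $n$-independent Gamma-function prefactors --- and of the signs produced by the reflection-type identity --- across $r$ nested products. Conceptually, the argument is a line-by-line $A_r$ lift of the proof of Proposition~\ref{prop:3H3}; the one genuinely new structural observation is that the $A_r$ Vandermonde factor $\prod_{1\le i<j\le r}\frac{x_i+k_i-x_j-k_j}{x_i-x_j}$ is shared by $g_{kl}$ in Theorem~\ref{thm:mmi} and by the summand of \eqref{eq:ar3F2}, which is exactly what allows the whole scheme to go through unchanged in the multivariate setting.
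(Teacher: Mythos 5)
Your proposal follows essentially the same route as the paper's proof: it invokes the inverse relations for the $A_r$ matrix inverse of Theorem~\ref{thm:mmi}, choosing $b_k$ and $a_l$ so that $\sum_{k}g_{kl}b_k=a_l$ is an instance of the $A_r$ Pfaff--Saalsch\"utz summation \eqref{eq:ar3F2} under the substitution you describe, whence $\sum_{n}f_{nk}a_n=b_k$ holds automatically and rewrites into \eqref{eq:ar3H3}. The building blocks you identify for $b_k$ --- including the terminating factor $(-M)_{|k|}$ that confines the sum to $k_i\ge0$, $|k|\le M$ --- are exactly what the argument requires, so the proposal is correct and matches the paper's proof in all essentials.
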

\begin{proof}
We apply Theorem~\ref{thm:mmi} and suitably choose $a_l$ and $b_k$,
with $k,l\in\Z^r$, such that
\begin{equation*}
\sum _{k\in\mathbb Z^r}g_{k l}b_{k}=a_{l}
\end{equation*}
holds by 
the $A_r$ Pfaff--Saalsch\"utz summation in \eqref{eq:ar3F2}.
Once this is achieved, the inverse relation
\begin{equation*}
\sum _{n\in\mathbb Z^r}f_{n k}a_{n}=b_{k}
\end{equation*}
(subject to absolute convergence, which in our case is satisfied)
automatically must be true, yielding, after some rewriting,
the claimed result.
The choice for $a_l$ and $b_k$ that works is
\begin{align*}
a_l&=\prod_{i,j=1}^r\frac{\Ga(1+x_j-x_i,1+a_j-a_i+l_j-l_i)}
{\Ga(1-a_j-l_j-x_i)}
\prod_{i=1}^r\frac{\Ga(1+c-a_i+|l|-l_i)}{\Ga(1+c+|l|+x_i)}\\
&\quad\;\times\frac{(1+c-b+|l|)_M}{\big(1+c-b-\sum_{j=1}^r(a_j+x_j)\big)_M}
\prod_{i=1}^r\frac{(1+c-a_i+|l|-l_i)_M}{(1+c+|l|+x_i)_M}
\end{align*}
and
$$
b_k=\frac{\prod_{i=1}^r(b+x_i)_{k_i}}{\prod_{i,j=1}^r(1+x_i-x_j)_{k_i}}
\cdot\frac{(b)_{|k|}\,(-1)^{|k|}}{\big(b-c-M-\sum_{j=1}^r(a_j+x_j)\big)_{|k|}},
$$
which is a matter of routine to verify.
\end{proof}
\begin{remark}
A similar analysis as described in Remark~\ref{rem:2l} can be applied to
\eqref{eq:ar3H3} in order to recover Milne's $A_r$
Pfaff--Saalsch\"utz summation in \eqref{eq:ar3F2}
or Gustafson's $A_r$ $_2H_2$ summation in \eqref{eq:ar2H2}.
The two relevant limiting cases are as follows:
\begin{enumerate}
\item
  If $a_j\to -x_j-k_j$ ($k_j$ being non-negative integers) for $1\le j\le r$,
then the multilateral series in \eqref{eq:ar3H3}
gets truncated from below and from above so that the sum is finite. By
a polynomial argument, $M$ can then be replaced by any complex number.
We then obtain an identity that can be seen to be equivalent to Milne's
Pfaff--Saalsch\"utz summation in \eqref{eq:ar3F2} after
substitution of parameters.

\item
  If, in \eqref{eq:ar3H3}, we formally let $M\to\infty$
  (which can be justified by appealing to Tannery's theorem),
  and rewrite the products on the right-hand side which are of the
  form $(x)_k$ as $\Ga(x+k)/\Ga(x)$, we can apply analytic continuation
to replace $k_1,\ldots,k_r$ by new complex variables. We then obtain
an identity that can be seen to be equivalent to Gustafson's
$A_r$ ${}_2H_2$ summation in \eqref{eq:ar2H2} after
substitution of parameters.
\end{enumerate}

\end{remark}

\section{Conclusion}\label{sec:con}
Utilizing summations for bilateral and multilateral (basic)
hypergeometric series, we were able to derive two intimately related
but slightly different kinds of results: Firstly, we established
the orthogonality of a specific family of functions on the unit circle
that can be represented in terms of multiples of specific well-poised
bilateral basic hypergeometric $_3\psi_3$ series.
We mention here that so far we failed to extend that
family of orthogonal functions on the unit circle
(i.e., on the $1$-torus) to a (non-trivial) family of multi-indexed
orthogonal functions on the $r$-torus; we would like to propose this
as an open problem.
Secondly, using a similar method, we were able to
obtain bilateral and multilateral matrix inverses,
applicable to bilateral (and multilateral) hypergeometric series.
A notable application of these inverses includes a summation for
a specific multilateral hypergeometric $_3H_3$ series associated
to the root system $A_r$.

\section*{Acknowledgements}
The Author would like to thank the Referees for their
constructive comments which helped to improve the paper.

\section*{Statements and Declarations}
Data sharing is not applicable to this article as no datasets were
generated or analyzed during the current study.
The Author further declares to have no relevant financial
or non-financial interests to disclose.

\end{document}